\DeclareMathOperator{\E}{\mathbb{E}}
\newtheorem{cor}{Corollary}
\newtheorem{remark}{Remark}
\newtheorem{pro}{Proposition}
\newtheorem{thm}{Theorem}
\newtheorem{asu}{Assumption}
\newcommand{\com}[1]{{\color{red}{Comment: #1}}}
\newcommand{\com}[1]{}
\title{\LARGE \bf
A Reputation-Based Contract for Repeated Crowdsensing with\\ Costly Verification
}
\author{Donya G. Dobakhshari, Parinaz Naghizadeh, Mingyan Liu, and Vijay Gupta
\thanks{D. G. Dobakhshari and V. Gupta are with the Department of Electrical Engineering, University of Notre Dame, IN, USA. Email:{\tt\small (dghavide, vgupta2)@nd.edu}. They are supported in part by NSF grants 1239224, 1544724, and 1550016.}
\thanks{P. Naghizadeh and M. liu are with the Department of Electrical Engineering and Computer Science, University of Michigan, MI, USA.  Email:{\tt\small (naghizad, mingyan)@umich.edu}. They are supported in part by NSF grants ECCS-1446521 and CNS-1646019.}}%
\begin{document}
\maketitle
\thispagestyle{empty}
\pagestyle{empty}

\begin{abstract}
We study a setup in which a system operator hires a sensor to exert costly effort to collect accurate measurements of a value of interest over time. At each time, the sensor is asked to report his observation to the operator, and is compensated based on the accuracy of this observation. Since both the  effort and observation are  private information for the sensor, a naive payment scheme which compensates the sensor based only on his self-reported values will lead to both shirking and falsification of outcomes by the sensor. 
We consider the problem of designing an appropriate compensation scheme to incentivize the sensor to at once exert costly effort and truthfully reveal the resulting observation.  

To this end, we formulate the problem as a repeated game and propose a compensation scheme that employs stochastic verification by the operator coupled with a system of assigning  reputation to the sensor. 
In particular, our proposed payment scheme compensates the sensor based on both the effort in the current period  as well as the history of past behavior. 
We show that by using past behavior in determining present payments, the operator can both incentivize higher effort as well as more frequent truthtelling by the sensor and decrease the required verification frequency.
\end{abstract}

\section{Introduction}

Providing incentives for individuals to follow the system operator's desired policies in smart networks has received increased attention in recent years (see, e.g., ~\cite{gao, saad, cassand, manshaei} and the references therein). In a typical such setting, the system operator delegates tasks to several autonomous agents. 
The agents may not benefit directly from the outcome of the task, and hence may not exert sufficient effort to complete the task with the quality desired by the operator. Further, due to reasons such as privacy, it may be costly (or even impossible) for the system operator to access information such as effort exerted by each agent or the resulting outcome directly; instead, she must rely on the data reported by the agent. Consequently, in the absence of a suitable compensation scheme, rational self-interested agents may refrain from exerting desired effort, and further possibly send falsified information in response to the operator's inquiry about the task.

As our running example, we consider a crowdsensing application in which autonomous  sensors are employed to take measurements about a quantity of interest to the system operator over a predetermined time horizon. At each time step, these measurements are used by the system operator to generate an estimate of the quantity of interest. The sensors incur an effort cost for obtaining each measurement with a specified level of accuracy. A sensor can choose to increase his effort and attain more accurate measurements, but at the expense of a higher effort cost. This cost may model, e.g.,  the cost of operating the device, or battery power. The sensors are then rewarded based on the accuracy of the information they provide to the operator. Since sensors do not, in general, attach a value to the accuracy of the estimate at the system operator, they do not have any incentive to exert costly effort for generating accurate observations. Furthermore, the system operator  has no access to the sensors' private information (i.e., either the level of effort, or the true accuracy of the measurements they generate). As a result, with a compensation scheme that rewards sensors for self-reported accuracy of the measurements they generate, the sensors may expend little effort, yet misreport their accuracy in order to receive higher compensation. The problem we consider in this paper is to generate a contract for the sensors so that they provide measurements with sufficient accuracy to enable the operator over time to generate an estimate with a desired quality. While we concentrate on crowdsensing as the example of interest in this paper, the contract can obviously be applied to other examples mentioned earlier that face the same challenges. 


Intuitively, designing an appropriate contract in our setting is difficult due to two reasons: (i) profit misalignment, and (ii) information asymmetry between the system operator and the sensor providing the information. To alleviate these challenges, the operator needs to design incentive mechanisms that mitigate both \emph{moral hazard} (i.e., incentivizing desired actions when effort is costly and the level of effort expended is private information for a participant, e.g.,~\cite[Chapter~4]{laf}) and \emph{adverse selection}  (i.e., incentivizing participants to provide truthful information when information is private to them, e.g.,~\cite[Chapter~3]{laf}). 
 While an extensive literature in contract theory (see, e.g.,~\cite{gao, laf, salanie} and the references therein for an overview of the subject) has focused on resolving either moral hazard or adverse selection separately, we consider the problem of moral hazard \textit{followed} by adverse selection in a \textit{repeated setting}. This problem has received much less attention in the literature.  Notable exceptions include~\cite{dasg,bohren20,crocker}, and~\cite{ ghavidel}, which discuss the problem of moral hazard followed by the adverse selection in a \textit{static} framework.  The main difference of our work is to consider this problem in a repeated setting. The repeated setting provides new challenges to the problem in that sensors may adopt time-varying strategies to gain, and then misuse, the trust of the system operator. Further, the solutions in~\cite{dasg,bohren20,crocker,ghavidel} rely on verification of the outcomes generated by the agents {(either direct verification or cross-verification)}. If such verification is costly, care must be taken in the repeated setting to bound the verification cost by allowing only infrequent verification. 

 We would like to explain the concept of verification here. In order to mitigate the information asymmetries of moral hazard followed by adverse selection, the operator must find some way to verify the sensor's self-reported outcomes. Verification is crucial, as the operator relies only on the information transmitted by the sensor (i.e., she has no additional measurements or observations of the value of interest) and compensates the sensor for these measurements over each time step. 
  Therefore,  verification of the sensor's self-reported values is crucial in settings with information asymmetry (see e.g., ~\cite{mookherjee, border, baron}, in which verification is similarly proposed). However, the problem is not trivial even with verification, since verification may be delayed, costly, or imperfect. In this paper, we assume that verification is costly. Thus, while the operator can verify the sensor at each stage and through appropriate penalties ensure  truthful revelation by the sensors, this approach may not, in general, be optimal for the operator in the sense of optimizing his total utility due to costly nature of verification. Intelligent use of verification is, in itself, not a trivial matter.

We propose an alternative scheme, in which we relax the requirement for verification at every time step. We instead assume that verification is done with a specified  probability at every time step. We use this verification scheme  coupled with a \emph{history-dependent} payment scheme to then design an optimal contract. In particular, we base the compensation on a \emph{reputation} score assigned to each sensor. This reputation is based on the history of the  past interactions of the sensor with the operator. In other words, the operator rewards the sensor based on the sequence of his actions, rather than merely his behavior at the current stage.  In particular, the operator assigns
higher reputation (and consequently higher payments) to a
sensor that is verified and detected to be honest.  The operator uses reputation as well as verification  to incentivize the sensor to exert sufficient effort, and also to not misreport his outcome, so as to optimize his own utility over multiple time steps.  


It is worth mentioning that using reputation for mitigating information asymmetry,  particularly in repeated games, is a popular strategy in the literature, see e.g.~\cite{naghizad, mailath2, abreu,bohren, della}. 
In~\cite{ naghizad}, the authors address the problem of mitigating pure adverse selection by means of reputation indices in a static setting. 
 The authors of \cite{mailath2} present a comprehensive study of the use of reputation for mitigating adverse selection in repeated games, i.e., learning the hidden types of players  through repeated interactions. 
The works in ~\cite{abreu,bohren, della, han2016} 
have focused on repeated interactions between a system operator and an agent under the assumption of only hidden action (moral hazard) for the agent.  The work in \cite{abreu} proposes a method for mitigating moral hazard using imperfect verification, while \cite{bohren} additionally introduces reputations.

{ In the crowdsensing literature, the work studied in \cite{zhang2012, radanovic, shnay, rada} are also related to the work presented in this paper. \cite{zhang2012}  considers an ex ante payment for the agents with verifiable outcome and mitigates the pure moral hazard problem by introducing reputation-based incentive. Further, a class of peer prediction methods  has been studied in \cite{radanovic, shnay, rada}. Peer prediction mechanisms mostly consider the perfect knowledge of actions by the agents and deal with the problem of pure adverse selection. In addition, peer prediction literature studies the existence of truthful Nash equilibrium in a static framework. Note that the truthful Nash equilibrium is not necessarily the maximum-benefit equilibrium in these existing peer-prediction mechanisms, i.e., the agents can gain more benefit by deviating from the truthful equilibrium.}

Note that the aforementioned literature  assumes either pure moral hazard or pure adverse selection or considers the problem of mixed moral hazard and adverse selection in a static setting. 
To the best of our knowledge,  our work is the first to study the use of reputation for mitigating both types of information asymmetry simultaneously, in particular, moral hazard followed by adverse selection. 





The main contribution of our work is  to adopt a repeated game approach to the problem of simultaneously incentivizing high effort and truthful reports in a crowdsensing setup, and more generally, in problems which exhibit moral hazard followed by adverse selection. We propose a reputation-based  payment scheme coupled with stochastic verification for compensating a sensor who realizes outcomes desired by an operator. We show that under this scheme, the sensor will exert higher effort over time, and will truthfully disclose his accuracy with a higher frequency. Furthermore, the operator needs to resort to verification with a lower frequency. Nevertheless, the operator has to provide higher payments, as a result of which her overall payoff may decrease. We discuss the intuition, as well as some practical implications of these observations. 

The remainder of the paper is organized as follows. In Section \ref{sec:model}, we present the model and some preliminaries. We analyze the proposed reputation-based payment scheme in Section \ref{sec:two-stage}, and conclude in Section \ref{sec:conclusion} with some avenues for future work.

\section{Model and preliminaries} \label{sec:model}
We study the repeated interactions of a principal (here, the system operator who is interested in estimating a quantity of interest) who contracts with an agent (here, the sensor that generates the measurements)\footnote{We will henceforth use she/her for the operator, and he/his for the sensor.}

\begin{remark}
We would like to point out that the restriction to a single sensor is without loss of generality. This is due to the fact that we assume no budget constraint for the operator and do not consider cross verification among sensors. For notational ease, we will concentrate on the case of a single sensor being present.
\end{remark}
 For our crowdsensing setup, the operator must contract with the sensor since she does not have alternate sources of measurements available. The accuracy of the measurements taken by the sensor increases with the effort expended. Thus, the operator is interested in incentivizing high effort by the sensor, so as to attain sufficiently accurate estimates. However, both the true effort exerted by the sensor, as well as the outcome he obtains in terms of the measurement or its accurracy, are unobservable by the operator. In other words, the operator faces moral hazard (in that she does not know the level of effort expended by the sensor) followed by adverse selection (in that she does not know the outcome of the effort). She therefore relies on the report by the sensor  on the accuracy of the estimation. 

Formally, at every stage $k$ ($1\leq k\leq N$), the sensor performs the following actions: 
\begin{enumerate}[(i)]
\item He exerts an effort $x_k \in [0, \bar{x}]$ to perform the task of generating a measurement for which he incurs a cost of $h(x_k)$.  The effort $x_k$ leads to an outcome accuracy level  $\alpha({x}_k)$. \item He informs the operator of the outcome accuracy level. The sensor may misreport the accuracy level to correspond to some other level of effort $\hat{x}_{k}$. 
\end{enumerate}

\begin{asu}
The accuracy $\alpha({x}_k)$ is a deterministic function of ${x}_k$, and the function is known to both the sensor and the operator. 
In other words, without loss of generality, we may assume that the sensor  reports simply his effort level to the operator. 
\end{asu}

If the report of the sensor $\hat{x}_{k}$ is equal to the actual effort $x_{k}$ expended by her, we say that the sensor has been truthful ($T$) at stage $k$;  otherwise, we say that the sensor has falsified the output and is non-truthful ($NT$). We assume at stage $k$ the sensor chooses $T$ with probability $q_k$.

The operator derives a benefit $S(x_k)$ from the task performed by the sensor, which depends on the (true) effort $x_k$ by the sensor. We assume that this function is increasing and concave. We normalize $S(0)=0$, i.e., the operator does not derive any benefit when $x_k=0$. 

The sensor does not necessarily attach value to the outcome of the task assigned to him. As a result, he should be properly incentivized to exert the effort level desired by the operator. We assume that the operator offers a payment of $P_k$ to the sensor. The problem considered in this paper is the design of this payment by the operator so that a rational self-interested sensor will take actions that lead to maximization of the operator's utility.

We now specify the utilities of the operator and the sensor. To this end, we need to discuss two factors that we will use in our payment strategy: (i) verification of the sensor's report by the operator, and (ii) the history of the past reports/efforts of the sensor, which are captured by a \emph{reputation} score assigned to the sensor.  More specifically, at each stage $k$, we assume that the operator takes an action $v_k\in \{V, NV\}$, denoting verifying and not verifying, respectively, of the sensor's reported effort $\hat{x}_k$.  We assume at stage $k$ the operator chooses $V$ with probability $p_k$.  Further, we assume that the verification is perfect and  so that the operator accurately detects any falsification by the sensor, and the operator incurs a cost of $C>0$ for this verification. 
Note that the verification is conducted on the reported effort by the sensor (e.g., by verifying the number of measurements generated by the sensor), but not on the value of the measurements.
 
  Let $z_k$ denote the level of effort known to the operator at the end of stage $k$. Therefore,
  \begin{equation}
  z_k=
  \begin{cases}
  x_k & \text{if $v_k=V$}\\
  \hat{x}_k & \text{if $v_k=NV$}.
  \end{cases}
  \label{zk}
  \end{equation}

In addition, the operator assigns a reputation $R_k$ to the sensor at each stage $k$. The reputation $R_{k}$ is updated based on the history of the sensor's past reputation scores, as well as the assumed effort $z_{k}$ at stage $k$, i.e.,
\begin{equation}
R_{k}=f(k, R_1,\cdots, R_{k-1} , z_{k})
,~~ k=1,\cdots, N,
\label{eq:rep-func}
\end{equation}
where $R_{0}=0$ and the function $f(\cdot)$ is the \emph{reputation function} selected by the operator.  
The operator then uses this reputation score to offer a compensation of $P_k:=P(R_k)$ to the sensor, where $P(\cdot)$ is an increasing function of $R_{k}$ and is the payment function that needs to be designed.

Given the above setup and compensation scheme, the utility of the sensor at stage $k$ is given by 
\begin{equation*}
U^S_{k}=P(R_{k})-h(x_{k}).
\end{equation*}
For simplicity, in this paper, we assume a linear cost of effort $h(x_k)=bx_k$, with a unit cost $b>0$. $b$ is assumed to be known to the operator a priori. Therefore, the utility of the sensor at stage $k$ reduces to
\begin{equation}
U^S_{k}= P(R_{k})-bx_k.
\end{equation}

The utility of the operator is given by 
\begin{equation}
U^P_k=S(x_k)-P(R_k) - C \mathcal{I}\{v_k=V\},
\end{equation}
where   
\[\mathcal{I}\{v_k=V\}=
\begin{cases}
1& \text{if} \:\:v_k=V\\
0& \text{otherwise}
\end{cases}.
\]
Both the sensor and the operator discount future payoffs with a factor $\delta$, so that their payoffs over the entire time horizon is given by
\[U^S = \sum_{k=1}^N \delta^k U^S_k, ~~ U^P = \sum_{k=1}^N \delta^k U^P_k~.\]

The operator has to optimize her choice of actions $\{v_1, \cdots, v_N\}$, the reputation scores $\{R_1, \cdots, R_N\}$ and the payments $\{P_1, \cdots, P_N\}$ to maximize her expected utility $\E[U^P]$ with satisfying the following constraints:
\begin{enumerate}[(i)]
\item Incentive compatibility (IC) constraint: A contract is incentive compatible if the sensor chooses to take the action preferred by the operator. {Note that the sensor is a rational decision maker, and therefore  chooses his effort level to maximize his expected  profit $\E[ U^S]$.} Formally, if $\boldsymbol{x}^*=\{x_1^*, \ldots, x_N^*\}$ denotes the sequence of  efforts desired by the operator, then the compensation scheme should be such that   $\E[U^S(\boldsymbol{x}^*)]\geq \E[U^S(\boldsymbol{x})], ~ \forall \boldsymbol{x}$.

\item  Individual rationality (IR) constraint (Participation Constraint): Both the operator and the sensor should prefer participation in the proposed scheme to opting out. Formally :
\[
\E[U^P]\geq 0,~~ \E[ U^S]\geq 0,
\]
where $\E[\cdot]$ denotes expectation. In particular, in $\E[ U^S]$, the expectation is with respect to the verification by the operator, while in  $\E[U^P]$, the expectation is with respect to the truthfulness of the sensor.  Note that both verification by the operator and truthfulness of the sensor are stochastic.
\end{enumerate}

Therefore,  the optimization the  problem for the operator is given by
\begin{equation*}
\textrm{$\mathcal{P}_1$:}
\begin{cases}
& \underset{\{v_1, \cdots,v_N\},\{R_1, \cdots,~R_N\}, \{P_1, \cdots,~P_N\} }\max \E[U^p]\\
s.t. &\textrm{IC and IR constraints}.
\end{cases}
\end{equation*}


\begin{remark}
Note that the use of verification is indispensable: if the sensor is not verified, he will always exert effort $x_k=0$, and falsify his effort as $\hat{x}_k=\bar{x}$. The goal of introducing a reputation-based payment scheme is, therefore, to reduce (but not eliminate) the verification frequency.
\end{remark}
 \begin{remark}
 While the problem we pose and the solution we introduce can be considered for a general $N$, in this paper we focus on $N=2$. This case is sufficient to illustrate  the intuition behind the general solution, and is notationally more concise.  Considering the case of $N>2$ and more interestingly the case of infinite number of stages remain for future work.
 \end{remark}

\section{ Main Results}\label{sec:two-stage}
We now proceed to consider the two-stage (i.e., $N=2$) game between the sensor and the operator and solve the problem $\mathcal{P}_1$ in this context. Specifically, we will present a linearly weighted reputation-based payment scheme. We analyze the effects of modifying the weight in this reputation function on the effort expended by the sensor, the optimal verification frequency, and the resulting utility of the operator.  

\subsection{The weighted reputation function}

As discussed in Section \ref{sec:model}, the operator assigns a reputation $R_k(\cdot)$ to the sensor at each stage $k=1,2$, and uses it to assess the payment $P(R_k)$. 
\begin{asu}
\label{assum2}
For simplicity and without loss of generality, we choose the payment function as $P(R_k)=R_k$, where $R_k$ is defined  in \eqref{eq:rep-func}. 
\end{asu}

We next propose a \emph{weighted reputation function}, in order to assess we define the history-dependent payments for the sensor. Formally, 
\begin{equation}
\begin{cases}
\text{Payment at the first stage} = R(z_1),\\
\text{Payment at the second stage} ={(1-\omega) R(z_1)+\omega R(z_2)},
\end{cases}
\label{eq:lin-w-rep}
\end{equation}
where $R(\cdot)$ is an increasing and convex function, $z_1$ and $z_2$ are defined based on \eqref{zk}, and $0\leq \omega \leq 1$ is the \emph{reputation weight}. Note that by adjusting the value of $\omega$ the operator decides on the importance of the history of the behavior of the sensor in assessing  the current payment. For instance, when $\omega=1$, the compensation is based solely on the (reported or verified) effort expended at the current stage. We refer to this special case of compensation scheme as \emph{instant payments}. 
\begin{table*}[ht]
\centering
\vskip5pt\vskip0pt
\begin{tabular}{lll}
                     &             V        &          NV           \\ \cline{2-3} 
\multicolumn{1}{l|}{T} & \multicolumn{1}{l|}{$R_k(x_k)+\gamma-bx_k$, $S(x_k)-R_k(x_k)-\gamma-C$} & \multicolumn{1}{l|}{$R_k(x_k) - bx_k$, $S(x_k)-R_k(x_k)$} \\ \cline{2-3} 
\multicolumn{1}{l|}{NT} & \multicolumn{1}{l|}{$R_k(0)$, $-R_k(0)-C$} & \multicolumn{1}{l|}{$R_k(\bar{x})$, $-R_k(\bar{x})$} \\ \cline{2-3} 
\end{tabular}
\caption{Payoffs of the Sensor (row player) and the Operator (column player) at each stage $k$}
\label{t:stage-u}
\end{table*}

We choose the following reputation scheme.
If the sensor is not verified, $R(\cdot)$ is evaluated based on the sensor's reported output $\hat{x}_k$ in that stage. If the sensor is verified and found to be truthful, he is assigned a reputation based on his verified output $x_k$, and is further added a reputation boost of $\gamma\geq 0$. If the sensor is verified and found to be non-truthful, he gets assigned the minimum reputation denoted by $l$. We set $l=0$.\footnote{A choice of $l\neq 0$ may be of interest to ensure individual rationality  when verification is not perfect.} 
We further make the following assumption.

\begin{asu}
\label{assum3}
The operator chooses the reputation function from the linear family $R(z)=\alpha z+\beta$. 
\end{asu}
Given this Assumption \ref{assum3} and the fact that $R(.)$ should be an increasing function with the minimum value $R(0)=0$ and maximum $R(\bar{x})=h$, we conclude that the reputation function is of the form $R(z)=h\frac{z}{\bar{x}}$, where $R(\bar{x})=h\geq 0$. Therefore, the parameters  $h$ and $\gamma$ are design parameters for the operator.
\subsection{The payoff matrix}
To find the payoffs of the sensor and the operator, note that we have assumed no falsification cost for the sensor. We have also assumed that the reduction in reputation due to any detected falsification is independent of the amount of falsification. As a result, if the sensor wishes to behave strategically at stage $k$, he does not exert any effort and  realizes  $x_k = 0$, but reports the maximum effort $\hat{x}_k=\bar{x}$, to gain the maximum reputation/payment if not verified. The payoff matrix of the stage game is thus specified in Table \ref{t:stage-u}.

For the game in Table \ref{t:stage-u}, the operator designs the parameters of the compensation scheme (i.e., $h$, $\gamma$, and $\omega$), to satisfy IC and IR constraints, and maximize her profit. Therefore, the optimization  problem $\mathcal{P}_1$ is refined to :
 \begin{equation*}
\textrm{$\mathcal{P}_2$:}
\begin{cases}
&\underset{\omega,h, \gamma\geq 0}\max \E[U^p]\\
s.t. &\textrm{the (IC) and (IR) constraints},\\
& \textrm{Assumption \ref{assum2}, \ref{assum3} and equation \eqref{eq:lin-w-rep}}.
\end{cases}
\end{equation*}

We now proceed to the analysis of the two-stage game for which each stage is specified  in Table \ref{t:stage-u}.

\subsection{Nash equilibria}

We start with the pure strategy Nash equilibria (NE) of the two-stage game with stage payoffs given in Table \ref{t:stage-u}.  

\begin{pro}
\label{rep-NE}
The only pure strategy Nash equilibrium of the game in Table \ref{t:stage-u} is $(NT,NV)$. In this equilibrium, the operator offers no payment to the sensor, and the sensor exerts no effort. This is equivalent to the outside option for the operator.
\end{pro}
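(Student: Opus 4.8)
The plan is to prove the proposition by exhausting the four pure strategy profiles $(T,V)$, $(T,NV)$, $(NT,V)$, $(NT,NV)$ of the stage game in Table~\ref{t:stage-u}, showing that each of the first three admits a profitable unilateral deviation while the last does not. Throughout I treat the sensor's action ``$T$'' as accompanied by an effort choice $x_k\in[0,\bar{x}]$ reported truthfully, and ``$NT$'' as the fixed behavior $x_k=0$ with report $\hat{x}_k=\bar{x}$, consistent with the derivation preceding Table~\ref{t:stage-u}; recall also that $R(z)=h z/\bar{x}$, so $R(0)=0$ and $R(\bar{x})=h$.

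First I would handle the operator's incentives. Against any truthful strategy $(T,x_k)$, switching from $V$ to $NV$ leaves her revenue $S(x_k)$ and the reputation-based payment $R(x_k)$ unchanged (the report equals $x_k$), but saves the expenditure $\gamma+C>0$; hence $(T,V)$ is not a Nash equilibrium. Against $NT$ (effort $0$, report $\bar{x}$), playing $V$ yields $-R(0)-C=-C$ while $NV$ yields $-R(\bar{x})=-h$, so the operator strictly prefers $NV$ as soon as $h<C$. I would justify restricting to $h<C$ by appealing to the operator's own optimization: among contracts that admit a pure Nash equilibrium her equilibrium payoff is nonpositive and is maximized (at $0$) by taking $h=0$, so she never benefits from setting $h\ge C$. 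This rules out $(NT,V)$. (Alternatively, if $\gamma>0$ the sensor himself deviates from $NT$ to $(T,0)$, gaining $\gamma$.)

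Next I would treat the sensor's incentives under $NV$. His stage payoff is $P(R_k)-bx_k=R(\hat{x}_k)-bx_k$, which is nondecreasing in the report $\hat{x}_k$ and nonincreasing in the effort $x_k$ actually exerted, while truthfulness imposes $\hat{x}_k=x_k$; hence his best response to $NV$ is to report $\bar{x}$ while exerting $x_k=0$, i.e., to play $NT$ (strictly so whenever $h>0$, since then $R(x_k)-bx_k<R(\bar{x})$ for every $x_k\in[0,\bar{x}]$). This simultaneously shows that $(T,NV)$ is not a Nash equilibrium and that $NT$ is a best response to $NV$. Combining with the operator computation above — $NV$ (payoff $-h$) is a best response to $NT$ whenever $h\le C$ — gives that $(NT,NV)$ is a Nash equilibrium, and by the elimination of the other three profiles it is the unique pure one. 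Finally, since in this profile the sensor exerts $x_k=0$, the operator gains nothing from a positive reputation ceiling and optimally sets $h=0$, so the equilibrium payment is $R(\bar{x})=h=0$ and both players obtain utility $0$, which is exactly the operator's outside option.

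The step I expect to be the main obstacle is the comparison of $h$ with $C$: ruling out $(NT,V)$ and establishing that $(NT,NV)$ is an equilibrium both rest on the inequality $h<C$, which is not a hypothesis of the stage game but a consequence of the operator never wanting to pay a strategic sensor more than it costs to catch him; making this precise requires invoking the operator's optimization rather than just the fixed payoff matrix. A secondary, minor point is a degeneracy at $h=0$: there a truthful report of zero effort is behaviorally indistinguishable from $NT$, so the uniqueness claim should be read up to this equivalence.
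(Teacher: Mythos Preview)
Your proposal is correct and follows essentially the same route as the paper: exhaust the four pure profiles, eliminate $(T,V)$ via the operator's saving of $\gamma+C$ under $NV$, eliminate $(T,NV)$ via the sensor's gain from misreporting, and handle the remaining $NT$ row by invoking the operator's freedom to set $h$ (the paper simply sets $h=0$, you argue $h<C$), after which $(NT,NV)$ is the unique survivor with zero payment. Your treatment is in fact more explicit than the paper's about the one genuine subtlety you flag---that ruling out $(NT,V)$ and confirming $(NT,NV)$ both require stepping outside the fixed payoff matrix to use the operator's design choice of $h$---which the paper glosses over in a single clause.
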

\begin{proof}
{We start with the potential pure Nash equilibrium $(T, V)$, and analyze the payoff of the operator. By playing $V$ over $NV$ at the first stage, the operator decreases her utility by ${\gamma}$ at the subsequent stage. Therefore, $(T, NV)$ dominates $(T, V)$. Similarly, by analyzing the payoff of the sensor, we can see that $(NT, NV)$ dominates $(T, NV)$. We have therefore discarded  $(T, V)$ and  $(T, NV)$ as pure Nash equilibria of the stage games. Therefore, if a pure strategy Nash equilibrium exists, the sensor will be playing $NT$. However, given that the sensor is always playing $NT$, the operator's optimal choice is to set $h=0$, and play $NV$. Therefore, the only possible pure strategy Nash equilibrium of the game is $(NT, NV)$, in which the operator offers no payment to the sensor. Note also that this is in fact the operator's outside option.} 
\end{proof}
\vspace{0.4cm}

We, therefore, consider the mixed strategy equilibria of the game.  
The following proposition characterizes the mixed strategy NE of the game in Table \ref{t:stage-u}. 

\begin{pro}
\label{rep-mixed}
Under the weighted reputation scheme in \eqref{eq:lin-w-rep}, the mixed strategy equilibria of the game in Table \ref{t:stage-u} are as follows. The operator verifies the sensor with probabilities
\[p_2 = \frac{h-R(x_2)+\frac{1}{\omega}bx_2}{h+\gamma}~,\quad p_1 = \frac{h- R(x_1) + \frac{1}{1+(1-\omega){\delta}}bx_1}{h+\gamma}~.\] and the sensor reveals the truth with probabilities
\[q_2=\frac{h-\frac{1}{\omega}C}{h+\gamma},\quad q_1=\frac{h- \frac{1}{1+(1-\omega){\delta}}C}{h+\gamma}~.\]
For these mixed strategies to exist, the operator should select $h,\omega$ such that $\omega h>C$. 
\end{pro}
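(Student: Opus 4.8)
The plan is to solve the two‑stage game by backward induction, exploiting the standard characterization that in a mixed‑strategy equilibrium each player must be indifferent among the pure strategies in the support of its mixing.

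\emph{Stage two.} The first thing to notice is that in the second‑stage payment of \eqref{eq:lin-w-rep} the term $(1-\omega)R(z_1)$ is already fixed by the stage‑$1$ outcome; it enters the sensor's payoff (and, with a minus sign, the operator's) as an additive constant and therefore does not affect either best response. Hence the only reputation that is ``live'' for the stage‑$2$ mixing is $\omega R(z_2)$, i.e. the stage‑$2$ reputation carries effective weight $\omega$ while the effort cost $bx_2$ is still incurred in full. Reading the stage payoffs off Table~\ref{t:stage-u} with this weighting, the sensor's indifference between $T$ (exert $x_2$, report truthfully) and $NT$ given that the operator verifies with probability $p_2$ is
\[\omega\bigl(R(x_2)+p_2\gamma\bigr)-bx_2=\omega(1-p_2)h,\]
and the operator's indifference between $V$ and $NV$ given that the sensor is truthful with probability $q_2$ is
\[q_2\bigl(S(x_2)-\omega R(x_2)-\omega\gamma\bigr)-C=q_2\bigl(S(x_2)-\omega R(x_2)\bigr)-(1-q_2)\omega h.\]
Solving these two linear equations gives the stated $p_2$ and $q_2$ (in particular the $\tfrac1\omega$ factors).

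\emph{Stage one.} Next I would observe that the stage‑$2$ equilibrium probabilities just found do not depend on $z_1$: a stage‑$1$ deviation only shifts the stage‑$2$ payoffs by additive constants, so the continuation play is identical along every stage‑$1$ history. Consequently the sole channel by which a stage‑$1$ action affects the future is the reputation $R(z_1)$ that is carried into the second‑stage payment with weight $(1-\omega)$ and an extra discount $\delta$. Thus, in the total discounted payoff of each player, the stage‑$1$ reputation appears with effective weight $A:=1+\delta(1-\omega)$, whereas the cost $bx_1$ is borne only once. Repeating the indifference computation at stage $1$ with this weighting — sensor indifferent between $T$ and $NT$ given $p_1$, operator indifferent between $V$ and $NV$ given $q_1$ — yields
\[A\bigl(R(x_1)+p_1\gamma\bigr)-bx_1=A(1-p_1)h,\qquad q_1\,A(h+\gamma)=Ah-C,\]
which rearrange to the claimed $p_1$ and $q_1$, the factor $\tfrac1A=\tfrac{1}{1+(1-\omega)\delta}$ now multiplying $bx_1$ and $C$.

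\emph{Existence.} Finally I would verify that all four quantities are genuine probabilities. Since $\omega\le 1\le A$, we have $\tfrac1\omega C\ge\tfrac1A C$, so $q_2\le q_1$ and the binding nonnegativity requirement is $q_2\ge 0$, i.e. $h>\tfrac1\omega C$, equivalently $\omega h>C$; the remaining bounds $0\le p_k,q_k\le 1$ hold under mild additional conditions on the design parameters $h,\gamma$ relative to $R(x_k),bx_k$, which I would state explicitly. The step requiring the most care — and the real substance of the argument — is the intertemporal bookkeeping in the first two paragraphs: correctly separating, at each stage, the ``sunk'' part of the weighted reputation from the ``live'' part, and thereby extracting the effective weights $\omega$ at stage $2$ and $1+\delta(1-\omega)$ at stage $1$. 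Once these are pinned down, the equilibrium conditions reduce to routine linear algebra.
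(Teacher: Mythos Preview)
Your proposal is correct and follows essentially the same route as the paper: backward induction combined with the standard indifference conditions for a mixed equilibrium at each stage, then checking $q_2\ge 0$ to obtain $\omega h>C$. Your presentation is in fact slightly cleaner than the paper's, since you isolate up front the ``effective weights'' $\omega$ and $A=1+(1-\omega)\delta$ (via the observation that the stage-$2$ equilibrium play is independent of $z_1$), whereas the paper carries the full continuation utilities $U_2^S(R_1,x_2)$ and $U_2^P(R_1,x_2)$ through the stage-$1$ computation before the same simplification emerges.
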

\vspace{0.4cm}
\begin{proof}
See Appendix. 
\end{proof}

\subsection{Optimal choice of payment parameters} 
We now determine the optimal choice of the parameters of the payment scheme for the operator. Recall that the operator wishes  to choose $h$, $\gamma$, and $\omega$ to maximize $\E[U^P]$, subject to the (IC) and (IR) constraints of the sensor. We start by optimizing the choice of $h$ and $\gamma$ given a fixed reputation weight $\omega$.

\begin{thm}
\label{rep-parameters} 
Consider the linearly weighted reputation function in \eqref{eq:lin-w-rep}. 
Assume that $b\bar{x}>\sqrt{CS(\bar{x})}$. Then, for a given $\omega$,
\begin{enumerate}[(i)]
\item  the optimal reputation parameters are $\gamma=0$, and $h=\frac{b\bar{x}}{\omega}$.
\item  The operator will incentivize effort level $\bar{x}$ at the first stage, and the effort $x^*$ at the second stage, where $\frac{\partial S}{\partial x}(x^*)=b$ for $\omega\neq 1$.
\item The optimal actions are as follows. The operator verifies the sensor with probabilities $p_2=1$ and $p_1=\frac{\omega}{1+(1-\omega){\delta}}$. The sensor is truthful with probabilities $q_2=1-\frac{C}{b\bar{x}}$ and $q_1=1-\frac{\omega}{1+(1-\omega){\delta}}\frac{C}{b\bar{x}}$.
\end{enumerate}
\end{thm}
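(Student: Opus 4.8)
\emph{Sketch of the argument.} The plan is to substitute the mixed-equilibrium quantities of Proposition~\ref{rep-mixed} into the operator's expected utility, reduce the problem to a one-dimensional optimization over the scale parameter $h$, and then invoke the hypothesis $b\bar{x}>\sqrt{CS(\bar{x})}$ to pin down the maximizer. I would begin by writing $\E[U^P]$ as an explicit function of $(h,\gamma)$ and of the target efforts $(x_1,x_2)$, for the fixed $\omega$. The clean way to evaluate the operator's equilibrium value is to compute it along one of her pure continuations, say ``never verify'': because in any mixed equilibrium she is indifferent between $V$ and $NV$ at each stage, this yields her true value while collapsing the expectation to an average over only the sensor's $T/NT$ randomization with the weights $q_1,q_2$ from Proposition~\ref{rep-mixed}. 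The one point needing care is that the stage-$2$ payment contains $(1-\omega)R(z_1)$, so the stage-$1$ report feeds into the stage-$2$ payment and the stages do not decouple; backward induction keeps the bookkeeping straight. Using $R(z)=hz/\bar{x}$, $1-q_1=\tfrac{C}{Ah}$ and $1-q_2=\tfrac{C}{\omega h}$ with $A:=1+(1-\omega)\delta$, the objective should collapse to
\[
\E[U^P]=\delta\,q_1\!\left(S(x_1)-\tfrac{Ah}{\bar{x}}\,x_1\right)+\delta^2 q_2\!\left(S(x_2)-\tfrac{\omega h}{\bar{x}}\,x_2\right)-(\delta+\delta^2)C,
\]
a sum of two terms of the form $q\,(S(x)-u)$ with $q=1-C/u$.

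Next I would dispose of $\gamma$. In Proposition~\ref{rep-mixed} the parameter $\gamma$ enters only through the denominators $h+\gamma$, so increasing it strictly lowers $q_1$ and $q_2$; since the objective above is increasing in each $q_k$ (a more truthful sensor lets the operator capture $S(x_k)$ more often and overpay for an inflated report less often) and $\gamma$ also appears as the cost $-\gamma$ in the $(T,V)$ cell of Table~\ref{t:stage-u}, her payoff is weakly decreasing in $\gamma$; moreover $\gamma$ does not relax the existence condition $\omega h>C$ or the individual-rationality constraints. Hence $\gamma=0$ is optimal.

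With $\gamma=0$ it remains to optimize over $h$ and the efforts. Feasibility ($p_k,q_k\in[0,1]$) forces $\omega h>C$ and, whenever the operator wants $x_2>0$, also $p_2\le 1\iff h\ge b\bar{x}/\omega$ (which in turn implies $p_1\le1$). For $h\ge b\bar{x}/\omega$ one has $u\ge b\bar{x}$ in both terms above, so $\frac{d}{du}\big[(1-C/u)(S(\cdot)-u)\big]=\frac{C\,S(\cdot)}{u^{2}}-1\le\frac{C\,S(\bar{x})}{(b\bar{x})^{2}}-1<0$ by hypothesis; thus $\E[U^P]$ is strictly decreasing in $h$ on that range and the operator takes the smallest admissible value, $h=b\bar{x}/\omega$. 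At $h=b\bar{x}/\omega$ the stage-$2$ reward rate $\omega h/\bar{x}$ equals the sensor's marginal cost $b$, so he is indifferent over $x_2$ and the operator implements the $x_2$ maximizing her stage-$2$ surplus $S(x_2)-bx_2$, i.e.\ $x^*$ with $\frac{\partial S}{\partial x}(x^*)=b$; at the first stage the reward rate is the strictly larger $Ah/\bar{x}>b$ for $\omega\neq1$, so the sensor's unique best response is $x_1=\bar{x}$. Part~(iii) then drops out by substituting $\gamma=0$, $h=b\bar{x}/\omega$, $x_1=\bar{x}$, $x_2=x^*$ into Proposition~\ref{rep-mixed}, giving $p_2=1$, $p_1=\omega/(1+(1-\omega)\delta)$, $q_2=1-C/(b\bar{x})$, and $q_1=1-\tfrac{\omega}{1+(1-\omega)\delta}\tfrac{C}{b\bar{x}}$; one also checks that $\E[U^P]\ge0$ and $\E[U^S]\ge0$ hold at this point.

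I expect the main obstacle to be the boundary analysis around $h=b\bar{x}/\omega$ combined with the stage-coupling from the history weight. The optimum lies on the constraint $p_2=1$ rather than in the interior of the mixed-strategy region, so one must separately rule out $h<b\bar{x}/\omega$ (where no mixed equilibrium with $x_2>0$ exists, so a stage is effectively wasted) and $h>b\bar{x}/\omega$ (where $x_2=\bar{x}$ is forced and payments rise); $b\bar{x}>\sqrt{CS(\bar{x})}$ is precisely the condition making the latter comparison go through and, more broadly, ensuring that incentivizing effort at both stages beats concentrating it. Getting the reduced objective exactly right --- in particular that a detected falsification at stage~$1$ wipes out the $(1-\omega)R(z_1)$ piece of the stage-$2$ payment, which is what produces the factor $A=1+(1-\omega)\delta$ in $q_1$ and $p_1$ --- is the step I would handle most carefully.
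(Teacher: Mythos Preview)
Your proposal is correct and lands on the same proof architecture as the paper: plug the mixed-equilibrium quantities of Proposition~\ref{rep-mixed} into $\E[U^P]$, eliminate $\gamma$, and then do a case analysis over $h$ governed by the sensor's incentive constraints at the two thresholds $b\bar{x}/\delta^\omega$ and $b\bar{x}/\omega$. The reduced objective you write down, $q_1(S(x_1)-Ahx_1/\bar{x})+\delta q_2(S(x_2)-\omega h x_2/\bar{x})-(1+\delta)C$, is exactly the paper's expression for $\E[U^P]$ at $\gamma=0$ (up to your global $\delta$ normalization), and your treatment of $h>b\bar{x}/\omega$ via $d/du[(1-C/u)(S(\bar{x})-u)]=CS(\bar{x})/u^2-1<0$ is the paper's Case~V computation.

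Where you genuinely diverge is in how you obtain the reduced objective and how you dispose of $\gamma$. The paper computes $\E[U^P]$ by brute substitution of $(p_k,q_k)$ and then differentiates the full expression in $\gamma$. You instead evaluate the operator's value along the pure continuation $(NV,NV)$, which is legitimate because she is indifferent at each node in the mixed equilibrium; along $NV$ the parameter $\gamma$ never appears in the stage payoffs of Table~\ref{t:stage-u}, so it enters the objective \emph{only} through the denominators of $q_1,q_2$. Since the objective is increasing in each $q_k$ (your observation that $\partial/\partial q_k$ equals $S(x_k)+c_k(h-R(x_k))\ge 0$), $\gamma=0$ follows without any calculus on the full expression. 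This is a cleaner and more conceptual route than the paper's. One caveat: your reduced formula is written \emph{after} setting $\gamma=0$, so in the write-up you should run the $\gamma$ argument first (it only needs the $NV$ representation and monotonicity in $q_k$, not the explicit formula) and then specialize. Your parenthetical about ``$-\gamma$ in the $(T,V)$ cell'' is superfluous once you commit to the $NV$ evaluation.

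On the case analysis, the paper is more explicit than your sketch: it separates the five regimes $h<b\bar{x}/\delta^\omega$, $h=b\bar{x}/\delta^\omega$, $b\bar{x}/\delta^\omega<h<b\bar{x}/\omega$, $h=b\bar{x}/\omega$, $h>b\bar{x}/\omega$, and compares payoffs. Your phrase ``a stage is effectively wasted'' for $h<b\bar{x}/\omega$ is the right intuition but not yet a comparison with Case~IV; you should record that at $h=b\bar{x}/\delta^\omega$ the operator gets $-(C)+(1-C/(b\bar{x}))(S(x^*)-bx^*)$, and check directly (using $b\bar{x}>\sqrt{CS(\bar{x})}$) that Case~IV dominates. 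The paper itself is terse at this final step, so you are not missing an idea, only the bookkeeping.
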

\begin{proof}
See Appendix.
\end{proof}
\vspace{0.3cm}

\subsubsection{Role of inter-temporal incentives}
Intuitively, the reputation weight $\omega$ determines the importance  of  inter-temporal  incentives  (i.e.,  conditioning  future payments on the history of past efforts). In particular, $\omega=1$  yields  an  instant  payment  scheme,  in  which no  inter-temporal  incentives  are  present.  For this case,  the actions of the operator and the sensor are as follows. 

\begin{cor}
\label{no-rep-probs}
If $\omega=1$, the sensor realizes $(x^*, x^*)$,  the operator  verifies the sensor with probability $p_2=p_1=1$, and the sensor is only truthful with probability $q_1=q_2=1-\frac{C}{b\bar{x}}<1$.
\end{cor}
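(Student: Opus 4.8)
The plan is to obtain Corollary~\ref{no-rep-probs} as a direct specialization of Theorem~\ref{rep-parameters} by substituting $\omega = 1$ into each of its three conclusions. Since Theorem~\ref{rep-parameters} has already done the heavy lifting (solving for the optimal $h$, $\gamma$, and the induced equilibrium actions for arbitrary $\omega$), the corollary should require essentially no new argument beyond checking that the $\omega=1$ case is not excluded by any of the hypotheses and then plugging in.

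First I would note that the standing assumption $b\bar{x} > \sqrt{C S(\bar{x})}$ is independent of $\omega$, so Theorem~\ref{rep-parameters} applies verbatim at $\omega = 1$. From part (i), the optimal parameters become $\gamma = 0$ and $h = \frac{b\bar{x}}{\omega} = b\bar{x}$; in particular the existence condition $\omega h > C$ from Proposition~\ref{rep-mixed} reads $b\bar{x} > C$, which is implied by $b\bar{x} > \sqrt{C S(\bar{x})}$ together with $S(\bar{x}) \ge \bar{x} \ge \ldots$ — more carefully, one should just observe $b\bar{x} > \sqrt{C S(\bar x)} \ge \sqrt{C \cdot C} = C$ whenever $S(\bar x) \ge C$, or simply note the regime of interest; I would state it cleanly using whatever bound the paper has already invoked. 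Then from part (iii), $p_2 = 1$ as before, and $p_1 = \frac{\omega}{1 + (1-\omega)\delta}$ evaluates to $\frac{1}{1 + 0} = 1$; likewise $q_2 = 1 - \frac{C}{b\bar{x}}$ is unchanged, and $q_1 = 1 - \frac{\omega}{1+(1-\omega)\delta}\frac{C}{b\bar{x}} = 1 - \frac{C}{b\bar{x}}$, so $q_1 = q_2 = 1 - \frac{C}{b\bar{x}} < 1$, the strict inequality following from $C > 0$.

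The one subtlety is part (ii): Theorem~\ref{rep-parameters}(ii) asserts the second-stage incentivized effort is $x^*$ with $\frac{\partial S}{\partial x}(x^*) = b$ only for $\omega \neq 1$, and the first-stage effort is stated as $\bar{x}$. So for the corollary I cannot simply quote part (ii); instead I would re-derive the $\omega = 1$ effort levels directly. With $\omega = 1$ the payment scheme~\eqref{eq:lin-w-rep} decouples: the payment at each stage depends only on that stage's effort (instant payments, as the paper names it), so the operator's problem separates across the two stages into two copies of the same one-shot design problem. In that one-shot problem, using the mixed-equilibrium payoffs from Proposition~\ref{rep-mixed} and the optimal $h = b\bar x$, $\gamma = 0$, the operator's expected stage utility as a function of the targeted effort $x$ is (after substituting $p$ and $q$) of the form $q(x)\bigl(S(x) - bx\bigr)$-type expression minus verification cost; maximizing pointwise gives the first-order condition $\frac{\partial S}{\partial x}(x) = b$, i.e. $x = x^*$, at \emph{both} stages — hence $(x^*, x^*)$. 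I expect this re-derivation of the effort levels to be the main (and really the only) obstacle; everything else is substitution. I would therefore structure the proof as: (1) invoke Theorem~\ref{rep-parameters}(i) for $h,\gamma$; (2) observe the instant-payment decoupling and redo the stagewise optimization to get $(x^*,x^*)$; (3) substitute $\omega=1$ into the probability formulas of Theorem~\ref{rep-parameters}(iii) to read off $p_1 = p_2 = 1$ and $q_1 = q_2 = 1 - \frac{C}{b\bar{x}} < 1$.
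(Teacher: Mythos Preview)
Your proposal is correct and arrives at the same conclusions, but your handling of the effort levels differs slightly from the paper's. The paper's argument is a one-line observation: at $\omega=1$ the quantity $\delta^\omega := 1+(1-\omega)\delta$ equals $\omega=1$, so the two thresholds $h=b\bar{x}/\omega$ and $h=b\bar{x}/\delta^\omega$ from the case analysis in the proof of Theorem~\ref{rep-parameters} coincide at $h=b\bar{x}$. This collapse means the sensor is indifferent over effort at \emph{both} stages (not just the second), and the Case~II analysis already shows the operator then incentivizes $x^*$; hence $(x^*,x^*)$. Your route instead argues that $\omega=1$ decouples the payment scheme~\eqref{eq:lin-w-rep} into two identical one-shot problems and re-solves the stagewise optimization from scratch. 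Both are valid; the paper's is shorter because it recycles the existing case analysis, while yours is more self-contained and makes explicit why the ``instant payments'' label is appropriate. Your steps (1) and (3) match the paper exactly. The aside about $b\bar{x}>C$ is a digression you can drop---the paper simply invokes it as a standing assumption and you need not re-derive it.
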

\begin{proof}
Note that if $\omega=1$, we have $h=\frac{b\bar{x}}{\omega}=\frac{b\bar{x}}{\delta^\omega}=b\bar{x}$ at each stage.  This leads to the sensor chooses $x^*$ at both first and second stage, i.e., $x_1=x_2=x^*$ if $\omega=1$. Further, substituting $\omega=1$ in $(iii)$ of Theorem \ref{rep-parameters}  leads to $p_2=p_1=1$, and  $q_1=q_2=1-\frac{C}{b\bar{x}}<1$.
\end{proof}

For $\omega=1$, we can see  that although the operator  always verifies the sensor, he chooses $NT$ with a strictly positive probability. It occurs due to the fact that for $\omega=1$, the utility of the sensor under $T$ is equal to his utility under $NT$. Hence, the sensor is indifferent between choice of $T$ and $NT$. 
  
By comparing Theorem \ref{rep-parameters} and Corollary \ref{no-rep-probs}, we observe that while the verification frequency, falsification probabilities, and the effort level of the senor, at the second stage with the use of reputation remain equal to the case when no reputation used, the values at the first stage are affected by the introduction of inter-temporal incentives. In particular,  when the reputations that depend on history of the behavior of the sensor are used, the operator needs to verify the sensor with a lower probability, and the sensor is truthful with a higher probability. Furthermore, the sensor exerts higher effort in the first stage.

\subsubsection{Optimal choice of the reputation weight $\omega$}

Finally, we consider the optimal choice of $\omega$, under which the operator's expected payoff is maximized. 

\begin{thm}\label{opt-w}
Assume that $b\bar{x}>\sqrt{CS(\bar{x})}$. A choice of $\omega=1$ maximizes the operator's payoff. That is, instant payments yield higher payoffs than payments based on linearly weighted reputations.  
\end{thm}
\begin{proof}
For $b\bar{x}>\sqrt{CS(\bar{x})}$, the optimal choice of $h$ is  identified in Theorem \ref{rep-parameters}. Hence, we now analyze the optimal choice of $\omega$ when $h = \frac{b\bar{x}}{\omega}$ and $(x_1,x_2)=(\bar{x},x^*)$. We need to solve the following optimization problem:
\begin{align*}
    \max_{\omega} ~~~ & -(1+\delta)C + (1-\frac{\omega}{\delta^\omega}\frac{C}{b\bar{x}})(S(\bar{x})-b\bar{x}\frac{\delta^\omega}{\omega} ) \\
&+  \delta(1-\frac{C}{b\bar{x}})(S(x^*)-b{x^*})~,~~~\text{s.t. } 0 \leq \omega\leq 1.
\end{align*}

We take the derivative of the objective function with respect to $\omega$. Define $f(\omega):=\frac{\delta^\omega}{\omega}$. Then,
\[\frac{\partial \E[U^P]}{\partial \omega} = -b\bar{x}f'(\omega)(1-\frac{1}{f^2(\omega)}\frac{S(\bar{x})C}{(b\bar{x})^2})~.\]
 With the assumption $\sqrt{CS(\bar{x})}<b\bar{x}$, and noting that $f(\omega)\geq 1$ and $f'(\omega)<0$, we conclude that $\frac{\partial \E[U^P]}{\partial \omega}>0$. Thus, $\E[U^P]$ is an increasing function of $\omega$ and the optimal choice is to set $\omega=1$. 
\end{proof}

We observe that while using inter-temporal incentives through linearly weighted reputation functions can benefit the operator by reducing the required verification frequency, increasing the effort level of the sensor, and increasing the probability of truthfulness, it will nevertheless reduce the operator's overall payoff. This is because the operator has to now offer a higher compensation to the sensor.

\section{Conclusion} \label{sec:conclusion}

In this paper, we studied the problem of contract design  between a system operator and a strategic sensor in a repeated setting. The sensor is hired to exert costly effort to collect sufficiently accurate observations for the operator. As the effort invested and the accuracy of the resulting outcome are both  private information of the sensor, the operator needs to design a compensation scheme that mitigates moral hazard followed by adverse selection. We proposed a reputation-based payment scheme coupled with stochastic verification. We showed that by increasing the importance of past behavior in our proposed linearly weighted reputation-based payments, the sensor exerts higher effort, and has a higher probability of being truthful. The operator, on the other hand, can invoke verification less frequently, but offers higher payments to the sensor, which leads to a lower payoff. 

We have so far considered inter-temporal incentives that are based on a linearly weighted reputation function. Considering other functional forms for evaluating a sensor's reputation, and its impact on the operator and sensor's strategies and payoffs, is an important direction of future work. In addition, we have considered the design of individual contracts for each  sensor, due to our assumptions of independent measurements and no budget constraint. As an interesting direction of future work, we are interested in analyzing the contract design problem for multiple sensors, given limited budget of the operator, as well as when the outcomes of the estimate at the sensor are coupled. Such coupling may enable the operator to cross-verify the outcomes of the sensors. 

\section*{Appendix}

\textit{Proof of Proposition \ref{rep-mixed}} : We use backward induction to find the operator and sensor's strategies, starting at time $k=2$. 
Assume the operator verifies the sensor with probability $p_2$. 
If the sensor reports truthfully, his expected  utility is given by
\begin{multline*}
p_2((1-\omega) R_1+\omega (R(x_2)+\gamma)-bx_2) + \\(1-p_2)((1-\omega) R_1+\omega R(x_2)-bx_2) =\\ (1-\omega) R_1+\omega R(x_2) + p_2\omega{\gamma} - bx_2~.\end{multline*}
If the sensor falsifies his report, his expected utility is given by
\begin{multline*}p_2 ((1-\omega)R_1) + (1-p_2)((1-\omega)R_1+\omega h) \\= (1-\omega){R_1} + \omega(1-p_2)h~.\end{multline*}

To make the sensor indifferent between $T$ and $NT$, the verification probability should be
\[p_2 = \frac{h-R(x_2)+\frac{1}{\omega}bx_2}{h+\gamma}~.\]

Now, assume the sensor is mixing between $T$ and $NT$ with probability $q_2$. To make the operator indifferent between $V$ and $NV$, we need
\begin{multline*}q_2(S(x_2)-((1-\omega) R_1+\omega (R(x_2)+\gamma))-C)+\\(1-q_2)(-(1-\omega)R_1-C)=q_2(S(x_2)-\\((1-\omega) R_1+\omega R(x_2)))+(1-q_2)(-((1-\omega)R_1+\omega h))
\end{multline*}
\[-q_2\omega{\gamma} - C = -(1-q_2)\omega{h} \Rightarrow q_2=\frac{h-\frac{1}{\omega}C}{h+\gamma}~.\]

Note that for the above mixed strategy to exist, the operator should choose $h$ and $\omega$ such that $\omega h>C$. Otherwise, the sensor will always play $NT$, leading to the operator playing $NV$, i.e., the outside option.

Given the above mixed strategies, the expected utility of the sensor with output $x_2$ at the second stage is given by
\begin{multline*}
\E[U^S_2(R_1, x_2)]=q_2((1-\omega){R_1}+\omega{R(x_2)} + p_2\omega{\gamma} - bx_2) \\+ (1-q_2)( (1-\omega){R_1} + \omega(1-p_2)h)\notag\\
= (1-\omega){R_1} + \frac{\omega\gamma h}{h+\gamma} + \frac{h}{h+\gamma}(\omega{R(x_2)}-bx_2)~.
\end{multline*}

Finally, the expected payoff of the operator in the second stage is given by
\begin{multline*}\E[U^P_2(R_1,x_2)] = -(1-\omega){R_1} - \omega h \frac{\gamma+\frac{1}{\omega}C}{h+\gamma}+ \\ \frac{h-\frac{1}{\omega}C}{h+\gamma}(S(x_2)-\omega{R(x_2)})\end{multline*}

We next consider the first stage. Let the probability of verification by the operator be given by $p_1$. If the sensor is truthful in this stage, he gets utility
\begin{multline*}
p_1(R(x_1)+\gamma-bx_1+\delta U^S_2(R(x_1)+\gamma, x_2)) \\+ (1-p_1) (R(x_1)-bx_1+\delta U^S_2(R(x_1), x_2)) \\
= R(x_1)-bx_1+\delta U^S_2(R(x_1), x_2) + p_1\gamma(1+(1-\omega){\delta})~.
\end{multline*}

The payoff from falsification on the other hand is given by
\begin{multline*}p_1(\delta U_2^S(0, x_2)) + (1-p_1)(h+\delta U_2^S(h, x_2)) \\= h+\delta U^S_2(h, x_2) - p_1 h (1+(1-\omega){\delta})~.
\end{multline*}

To make the sensor indifferent between the two actions, $p_1$ should be
\[p_1 = \frac{h- R(x_1) + \frac{1}{1+(1-\omega){\delta}}bx_1}{h+\gamma}~.\]

Next, let $q_1$ denote the probability that the sensor is truthful in stage 1. To make the operator indifferent between verification on not verifying, $q_1$ should be given by
\begin{multline*}
    q_1(S(x_1)-(R_1+\gamma)+\delta U^P_2(R(x_1)+\gamma, x_2)-C)+\\(1-q_1)(\delta U^P_2(0, x_2)-C)=\\q_1(S(x_1)-R_1+\delta U^P_2(R(x_1), x_2))+(1-q_1)(-h+\delta U^P_2(h, x_2))
\end{multline*}

This leads to
\[q_1=\frac{h - \frac{1}{1+(1-\omega){\delta}}C}{h+\gamma}~.\]

We need to verify that the derived $p_k$ and $q_k$ are valid probabilities. First, note that for $q_2$ to be valid, we require that $\omega h>C$.\footnote{If $\omega h<C$, the sensor will always play $NT$, in which case the operator should play $NV$, leading to the operator's outside option.} Also, as $1+(1-\omega)\delta\geq\omega$, the same assumption ensures that $q_1\geq 0$ as well. For the operator's actions, it is easy to see that $0\leq p_k\leq 1$ holds. 

Finally, note that the above analysis is valid when $x_k\neq 0$. If $x_k=0$ at either stage, the optimal strategy for the operator in that stage is to play $NV$. 
\vspace{0.4cm}

\textit{Proof of Theorem \ref{rep-parameters}} : We now proceed to finding the optimal choice of $h$ and $\gamma$ for the payment offered by the operator, under a fixed choice of reputation weight $\omega$.  

We first find the expected payoff of the sensor over the two stages of the game. The total utility of the sensor is given by
\begin{multline*}
\E[U^S(x_1,x_2)] = R(x_1)-bx_1+\\\delta \left((1-\omega){R(x_1)} + \frac{\omega\gamma h}{h+\gamma} + \frac{h}{h+\gamma}(\omega{R(x_2)}-bx_2)\right)+ \\
\frac{h- R(x_1) + \frac{1}{1+(1-\omega)\delta}bx_1}{h+\gamma}\gamma(1+(1-\omega){\delta})= (1+\delta) \frac{\gamma h}{h+\gamma} \\
+ \frac{h}{h+\gamma}((1+(1-\omega){\delta})R(x_1)+\omega{\delta}R(x_2)-bx_1 -  {\delta}bx_2)~.
\end{multline*}

We also find the expected payoff of the operator. 
\begin{small}
\begin{multline*}
\E[U^P(x_1,x_2)] = \\\frac{(h  - \frac{C}{1+(1-\omega){\delta}})\left(S(x_1)+(1+(1-\omega){\delta})(h-R(x_1))\right)}{h+\gamma}  \\-h
+ \delta \left(-(1-\omega){h} - \omega h \frac{\gamma+\frac{1}{\omega}C}{h+\gamma} + \frac{h-\frac{1}{\omega}C}{h+\gamma}(S(x_2)-\omega{R(x_2)})\right)\\
 = -(1+\delta)\frac{\gamma+C}{h+\gamma}h + \frac{h- \frac{1}{1+(1-\omega){\delta}}C}{h+\gamma}(S(x_1)-(1+(1-\omega){\delta})R(x_1)) \\+  \delta\frac{h-\frac{1}{\omega}C}{h+\gamma}(S(x_2)-\omega{R(x_2)})~.
\end{multline*}
\end{small}
First, note that with any reputation function, the derivative of the operator's utility with respect to $\gamma$ is given by
\begin{multline*}\frac{\partial \E[U^P]}{\partial \gamma} = -\frac{h-\frac{C}{\delta^\omega}}{(h+\gamma)^2}(S(x_1)+\delta^\omega(h-R(x_1)))\\ - \delta\frac{h-\frac{C}{\omega}}{(h+\gamma)^2}(S(x_2)+\omega(h-R(x_2)))<0~,\end{multline*}
where $\delta^\omega:=1+(1-\omega)\delta$. Note that $\omega\leq \delta^\omega$, with equality (only) at $\omega=1$. 
Therefore, the optimal choice is for the operator to choose $\gamma$ as small as possible (as long as the sensor's participation (IR) constraint is satisfied).

To proceed, we substitute $R(x)= h\frac{x}{\bar{x}}$. Assume the operator wants to incentivize $\hat{x}_1, \hat{x}_2$. Consider the IC constraints of the sensor. The first derivative of the sensor's utility with respect to his output level at each stage is given by
\[\frac{\partial \E[U^S]}{\partial x_1} = \frac{h}{h+\gamma}({\delta}^\omega\frac{h}{\bar{x}}-b)~, ~~~ \frac{\partial \E[U^S]}{\partial x_2} = \frac{h}{h+\gamma}{\delta}(\omega\frac{h}{\bar{x}}-b)~.\]
Also, with linear reputation functions, the utility of the sensor can be written as
\begin{multline*}\E[U^S(x_1,x_2)] = (1+\delta) \frac{\gamma h}{h+\gamma}+ \frac{h}{h+\gamma}((\frac{\delta^\omega h}{\bar{x}}-b)x_1+\\\delta(\frac{\omega h}{\bar{x}}-b)x_2)~.
\end{multline*}
Using the IC constraints, we conclude that the IR constraint of the sensor is always satisfied. As a result, we also conclude that the operator chooses $\gamma=0$. Therefore, the utility of the operator simplifies to
\begin{multline*}
    \E[U^P(x_1,x_2)] = -(1+\delta)C + (1-\frac{1}{\delta^\omega}\frac{C}{h})(S(x_1)-\\\delta^\omega h\frac{x_1}{\bar{x}}) +  \delta(1-\frac{1}{\omega}\frac{C}{h})(S(x_2)-\omega h \frac{x_2}{\bar{x}})~.
\end{multline*}

Using the IC constraints on the sensor's utility, the operator can incentivize different efforts by the sensor, depending on the choice of $h$:

\begin{itemize}
\setlength\itemsep{0.2in}
\item Case I: Set $h<  \frac{b\bar{x}}{\delta^\omega}$. Then, the sensor will realize output $0$ in both stages. Note that by Proposition \ref{rep-mixed}, the operator will choose to not verify the sensor, leading to a utility of zero. This is equivalent to the operator's outside option.  
\item Case II: Set $h = {b\bar{x}}/ {\delta^\omega}$.
In this case, the sensor will realize output $\hat{x}_2 = 0$ in the second stage, and be indifferent between all $\hat{x}_1$ in the first stage. Again by the discussion in the proof of  Proposition \ref{rep-mixed}, the operator will choose to not verify in the second stage. Therefore, her utility in this case reduces to
\[\E[U^P(x_1, 0)] = - C + (1-\frac{C}{b\bar{x}})(S(\hat{x}_1)-b{\hat{x}_1})~.\]
Note that the operator will incentivize $x_1=x^*$, for which $\frac{\partial S}{\partial x}(x^*)=b$. 
\item Case III: Set $ \frac{b\bar{x}}{{\delta}^\omega} < h < \frac{b\bar{x}}{\omega}$. Then the sensor will realize output $0$ in the second stage, and output $\bar{x}$ in the first stage. Recall also that in order to have a valid mixed strategy equilibrium, the operator has to pick $h$ such that $\omega h>C$. The operator's utility reduces to
\[\E[U^P(\bar{x}, 0)] = -C + (1-\frac{1}{\delta^\omega}\frac{C}{h})(S(\bar{x})-\delta^\omega h)~.\]
The derivative of the operator's utility with respect to $h$ is given by
\[\frac{\partial \E[U^P]}{\partial h} = \frac{CS(\bar{x})-(h\delta^\omega)^2}{h^2\delta^\omega}~~.\]
We see that if $\sqrt{CS(\bar{x})} < b\bar{x}$, the utility of the operator  is decreasing in $h$. Therefore, the optimal chocie is to set $h = \frac{b\bar{x}}{\delta^\omega}$. 
%
Note that this case becomes equivalent to Case II, but with the difference that the operator has incentivized $\bar{x}$. As $x^*$ is the optimal choice, it is easy to see that Case II dominates Case III under these parameters. 

\item Case IV: Set $h = \frac{b\bar{x}}{\omega}$. 
In this case, the sensor will realize output $\hat{x}_1 = \bar{x}$ in the first stage, and be indifferent between all $\hat{x}_2$ in the second stage. 
\begin{multline*}
    \E[U^P(\bar{x},x_2)] = -(1+\delta)C +\\ (1-\frac{\omega}{\delta^\omega}\frac{C}{b\bar{x}})(S(\bar{x})-b\bar{x}\frac{\delta^\omega}{\omega} ) +  \delta(1-\frac{C}{b\bar{x}})(S(x_2)-b{x_2})~.
\end{multline*}
The operator will incentivize $x_2=x^*$ for which $\frac{\partial S}{\partial x}(x^*)=b$.

\item Case V: Set $h > \frac{b\bar{x}}{\omega}$. Then, the sensor will realize output $\bar{x}$ in both stages. Note that with this choice, and the assumption of $b\bar{x}>C$, the constraint $\omega h>C$ is satisfied. The operator's utility reduces to:
\begin{multline*}\E[U^P(\bar{x}, \bar{x})] = -(1+\delta)C +\\ (1-\frac{1}{\delta^\omega}\frac{C}{h})(S(\bar{x})-\delta^\omega h)+  \delta(1-\frac{1}{\omega}\frac{C}{h})(S(\bar{x})-\omega h)~.
\end{multline*}
The derivative of the operator's utility with respect to $h$ is given by
\begin{multline}\frac{\partial \E[U^P]}{\partial h} = (1+\delta)\left[ \frac{CS(\bar{x}[(1-\delta)\omega+\delta]}{h^2\omega\delta^\omega}-1\right ]~,
\label{p2}
\end{multline}
Note that since $\omega<\delta^\omega$, $ b\bar{x}<h \omega$ leads to $b\bar{x} <h\delta^\omega$. Thus, for $\sqrt{CS(\bar{x})} < b\bar{x}$, we conclude that $\frac{CS(\bar{x}}{h^2\omega\delta^\omega}\leq1.$
Further given $(1-\delta)\omega+\delta \leq1$, we can see that $\E[U^P]$ is a decreasing function of $h$ and maximized if 
$h=\frac{b\bar{x}}{\omega}$. This reduces the problem to Case IV. 

\end{itemize}

Comparing the payoff of the operator in the aforementioned five cases, we conclude that, given $\omega$, the operator chooses  Case IV: $h = \frac{b\bar{x}}{\omega}$.

\vspace{0.4cm}


\bibliographystyle{IEEEtran}
%

\bibliography{ref}

\begin{thebibliography}{10}
\providecommand{\url}[1]{#1}
\csname url@samestyle\endcsname
\providecommand{\newblock}{\relax}
\providecommand{\bibinfo}[2]{#2}
\providecommand{\BIBentrySTDinterwordspacing}{\spaceskip=0pt\relax}
\providecommand{\BIBentryALTinterwordstretchfactor}{4}
\providecommand{\BIBentryALTinterwordspacing}{\spaceskip=\fontdimen2\font plus
\BIBentryALTinterwordstretchfactor\fontdimen3\font minus
  \fontdimen4\font\relax}
\providecommand{\BIBforeignlanguage}[2]{{%
\expandafter\ifx\csname l@#1\endcsname\relax
\typeout{** WARNING: IEEEtran.bst: No hyphenation pattern has been}%
\typeout{** loaded for the language `#1'. Using the pattern for}%
\typeout{** the default language instead.}%
\else
\language=\csname l@#1\endcsname
\fi
#2}}
\providecommand{\BIBdecl}{\relax}
\BIBdecl

\bibitem{gao}
H.~Gao, C.~H. Liu, W.~Wang, J.~Zhao, Z.~Song, X.~Su, J.~Crowcroft, and K.~K.
  Leung, ``A survey of incentive mechanisms for participatory sensing,''
  \emph{Communications Surveys \& Tutorials, IEEE}, vol.~17, no.~2, pp.
  918--943, 2015.

\bibitem{saad}
W.~Saad, Z.~Han, H.~V. Poor, and T.~Basar, ``Game-theoretic methods for the
  smart grid: An overview of microgrid systems, demand-side management, and
  smart grid communications,'' \emph{IEEE Signal Processing Magazine}, vol.~29,
  no.~5, pp. 86--105, 2012.

\bibitem{cassand}
C.~G. Cassandras and W.~Li, ``Sensor networks and cooperative control,''
  \emph{European Journal of Control}, vol.~11, no.~4, pp. 436--463, 2005.

\bibitem{manshaei}
M.~H. Manshaei, Q.~Zhu, T.~Alpcan, T.~Bac{\c{s}}ar, and J.-P. Hubaux, ``Game
  theory meets network security and privacy,'' \emph{ACM Computing Surveys
  (CSUR)}, vol.~45, no.~3, p.~25, 2013.

\bibitem{laf}
J.-J. Laffont and D.~Martimort, \emph{The theory of incentives: the
  principal-agent model}.\hskip 1em plus 0.5em minus 0.4em\relax Princeton
  university press, 2009.

\bibitem{salanie}
B.~Salani{\'e}, \emph{The economics of contracts: a primer}.\hskip 1em plus
  0.5em minus 0.4em\relax MIT press, 2005.

\bibitem{dasg}
A.~Dasgupta and A.~Ghosh, ``Crowdsourced judgement elicitation with endogenous
  proficiency,'' in \emph{Proceedings of the 22nd international conference on
  World Wide Web}.\hskip 1em plus 0.5em minus 0.4em\relax ACM, 2013, pp.
  319--330.

\bibitem{bohren20}
J.~A. Bohren and T.~Kravitz, ``Optimal contracting with costly state
  verification, with an application to crowdsourcing,'' 2016.

\bibitem{crocker}
K.~J. Crocker and T.~Gresik, ``Optimal compensation with earnings manipulation:
  Managerial ownership and retention,'' mimeo, Tech. Rep., 2010.

\bibitem{ghavidel}
D.~G. Dobakhshari, N.~Li, and V.~Gupta, ``An incentive-based approach to
  distributed estimation with strategic sensors,'' 2016, iEEE Conference on
  Decision and Control.

\bibitem{mookherjee}
D.~Mookherjee and I.~Png, ``Optimal auditing, insurance, and redistribution,''
  \emph{The Quarterly Journal of Economics}, vol. 104, no.~2, pp. 399--415,
  1989.

\bibitem{border}
K.~C. Border and J.~Sobel, ``Samurai accountant: A theory of auditing and
  plunder,'' \emph{The Review of economic studies}, vol.~54, no.~4, pp.
  525--540, 1987.

\bibitem{baron}
D.~P. Baron and D.~Besanko, ``Regulation, asymmetric information, and
  auditing,'' \emph{The RAND Journal of Economics}, pp. 447--470, 1984.

\bibitem{naghizad}
P.~Naghizadeh and M.~Liu, ``Perceptions and truth: a mechanism design approach
  to crowd-sourcing reputation,'' \emph{IEEE/ACM Transactions on Networking
  (TON)}, vol.~24, no.~1, pp. 163--176, 2016.

\bibitem{mailath2}
G.~J. Mailath and L.~Samuelson, \emph{Repeated games and reputations: long-run
  relationships}.\hskip 1em plus 0.5em minus 0.4em\relax Oxford university
  press, 2006.

\bibitem{abreu}
D.~Abreu, D.~Pearce, and E.~Stacchetti, ``Toward a theory of discounted
  repeated games with imperfect monitoring,'' \emph{Econometrica: Journal of
  the Econometric Society}, pp. 1041--1063, 1990.

\bibitem{bohren}
A.~Bohren, ``Stochastic games in continuous time: Persistent actions in
  long-run relationships,'' working paper, Tech. Rep., 2012.

\bibitem{della}
C.~Dellarocas, ``Reputation mechanism design in online trading environments
  with pure moral hazard,'' \emph{Information Systems Research}, vol.~16,
  no.~2, pp. 209--230, 2005.

\bibitem{han2016}
K.~Han, H.~Huang, and J.~Luo, ``Posted pricing for robust crowdsensing,'' in
  \emph{Proceedings of the 17th ACM International Symposium on Mobile Ad Hoc
  Networking and Computing}.\hskip 1em plus 0.5em minus 0.4em\relax ACM, 2016,
  pp. 261--270.

\bibitem{zhang2012}
Y.~Zhang and M.~van~der Schaar, ``Reputation-based incentive protocols in
  crowdsourcing applications,'' in \emph{INFOCOM, 2012 Proceedings IEEE}.\hskip
  1em plus 0.5em minus 0.4em\relax IEEE, 2012, pp. 2140--2148.

\bibitem{radanovic}
G.~Radanovic and B.~Faltings, ``Incentive schemes for participatory sensing,''
  in \emph{Proceedings of the 2015 International Conference on Autonomous
  Agents and Multiagent Systems}.\hskip 1em plus 0.5em minus 0.4em\relax
  International Foundation for Autonomous Agents and Multiagent Systems, 2015,
  pp. 1081--1089.

\bibitem{shnay}
V.~Shnayder, A.~Agarwal, R.~Frongillo, and D.~C. Parkes, ``Informed
  truthfulness in multi-task peer prediction,'' in \emph{Proceedings of the
  2016 ACM Conference on Economics and Computation}.\hskip 1em plus 0.5em minus
  0.4em\relax ACM, 2016, pp. 179--196.

\bibitem{rada}
G.~Radanovic and B.~Faltings, ``Incentives for subjective evaluations with
  private beliefs,'' in \emph{Proceedings of the 29th AAAI Conference on
  Artificial Intelligence (AAAI" 15)}, no. EPFL-CONF-215879, 2015, pp.
  1014--1020.

\end{thebibliography}

\end{document}